
\documentclass{article}
\usepackage{maa-monthly}

\newcommand{\floor}[1]{\lfloor #1 \rfloor}
\newcommand{\Z}{\mathbb{Z}}
\newcommand{\R}{\mathbb{R}}

\newcommand{\N}{\mathbb{N}}

\allowdisplaybreaks

\newtheorem{Proposition}{Proposition}
\newtheorem{Gem}{Gem}
\newtheorem{Theorem}[Proposition]{Theorem}

\theoremstyle{definition}


\begin{document}

\title{Four quotient set gems}
\markright{Four quotient set gems}

\author{Bryan Brown, Michael Dairyko, Stephan Ramon Garcia,\\ Bob Lutz, and Michael Someck}

\maketitle

\begin{abstract}
Our aim in this note is to present four remarkable facts about quotient sets.
These observations seem to have been overlooked by the \textsc{Monthly}, despite its intense coverage of quotient sets over the years.
\end{abstract}

\section*{Introduction}

	If $A$ is a subset of the natural numbers $\N = \{1,2,\ldots\}$, then we let
	$R(A) = \{a/a' : a,a' \in A\}$ denote the corresponding \emph{quotient set} (sometimes
	called a \emph{ratio set}).  Our aim in this short note is 
	to present four remarkable results which seem to have been overlooked in the 
	\textsc{Monthly}, despite its intense coverage of quotient sets over the years
	\cite{BT, QSDE, QGP, Hedman, Hobby, Nowicki, Starni}.  Some of these results are novel, 
	while others have appeared in print elsewhere but somehow remain largely unknown.
	
	In what follows, we let $A(x) = A \cap [1,x]$ so that $|A(x)|$ denotes the number of elements in 
	$A$ which are $\leq x$.  
	The \emph{lower asymptotic density} of $A$ is the quantity
	\begin{equation*}
		\underline{d}(A) = \liminf_{n\to\infty} \frac{ | A(n)| }{n},
	\end{equation*}
	which satisfies the obvious bounds $0 \leq \underline{d}(A) \leq 1$.  
	We say that $A$ is \emph{fractionally dense} if the closure of $R(A)$ in $\R$
	equals $[0,\infty)$ (i.e., if $R(A)$ is dense in $[0,\infty$)).
	
	Our four gems are as follows.
	\begin{enumerate}\addtolength{\itemsep}{0.5\baselineskip}
		\item[1.] The set of all natural numbers whose base-$b$ representation begins with the digit $1$
			is fractionally dense for $b=2,3,4$, but not for $b \geq 5$.
	
		\item[2.] For each $\delta \in [0,\frac{1}{2})$, there exists a set $A \subset \N$ with
			 $\underline{d}(A) = \delta$ that is not fractionally dense.
			On the other hand, if $\underline{d}(A) \geq \frac{1}{2}$, then $A$ 
			must be fractionally dense \cite{ST}.
		
		\item[3.] One can partition $\N$ into three sets, each of which is not fractionally
			dense.  However, such a partition is impossible using only two sets \cite{BST}.
		
		\item[4.] There are subsets of $\N$ which contain arbitrarily long arithmetic progressions, yet that
			are not fractionally dense.  On the other hand, there exist fractionally dense sets that
			have no arithmetic progressions of length $\geq 3$.
	\end{enumerate}

\section*{Base-$b$ representations}

	In \cite[Example 19]{QSDE}, it was shown that the set
	\begin{equation*}
		A = \{1\} \cup \{10,11,12,13,14,15,16,17,18,19\} \cup \{100,101,\ldots\} \cup \cdots
	\end{equation*}
	of all natural numbers whose base-$10$ representation begins with the digit $1$ is not fractionally dense.
	This occurs despite the fact that $\underline{d}(A) = \frac{1}{9}$,
	so that a positive proportion of the natural numbers belongs to $A$.  
	The consideration of other bases reveals the following gem.
		
	\begin{Gem}\label{G1}
		The set of all natural numbers whose base-$b$ representation begins with the digit $1$
		is fractionally dense for $b=2,3,4$, but not for $b \geq 5$.
	\end{Gem}

	To show this, we require the following more general result.	

	\begin{Proposition}\label{PropositionAB}
		Let $1 < a \leq b$.  The set
		\begin{equation}\label{eq:AB}
			A = \bigcup_{k=0}^{\infty} [b^k,ab^k) \cap \N
		\end{equation}
		is fractionally dense if and only if $b \leq a^2$.  Moreover, we also have
		\begin{equation*}
			\underline{d}(A) = \frac{a-1}{b-1}.
		\end{equation*}
	\end{Proposition}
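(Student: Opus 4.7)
The plan is to analyze $R(A)$ via a shift decomposition and reduce fractional density to an interval-covering condition. For $n \in [b^j, ab^j) \cap \N$ and $m \in [b^k, ab^k) \cap \N$, setting $d = j - k$ yields
\begin{equation*}
\frac{b^d}{a} \;<\; \frac{n}{m} \;<\; ab^d,
\end{equation*}
so $R(A) \subseteq \bigcup_{d \in \Z} I_d$, where $I_d := (b^d/a,\, ab^d)$. The consecutive intervals $I_d$ and $I_{d+1}$ meet precisely when $ab^d \geq b^{d+1}/a$, that is, when $b \leq a^2$. In the opposite case $b > a^2$, the gap $(ab^d,\, b^{d+1}/a)$ is a nonempty open interval disjoint from every $I_{d'}$, hence disjoint from $R(A)$, so $A$ fails to be fractionally dense.

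For the converse direction $b \leq a^2$, I would show that $R(A)$ is dense in each $I_d$, which suffices because $\bigcup_d I_d$ is then dense in $(0, \infty)$. Given $r \in I_d$ and a large exponent $k$, set $n = \lfloor rm \rfloor$ as $m$ ranges over integers in $[b^k, ab^k)$; then $|n/m - r| \leq 1/m$. A short computation using the strict inequality $b^d/a < r < ab^d$ shows that the subset of such $m$ for which $n \in [b^{k+d}, ab^{k+d})$ contains an interval of length comparable to $b^k$, producing arbitrarily good rational approximations to $r$ drawn from $R(A)$.

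For the density claim, observe that $|A(x)|/x$ is piecewise monotone: increasing on each block $[b^k, ab^k)$ and decreasing on each gap $[ab^k, b^{k+1})$. Consequently its liminf is attained along sequences $x \to b^{n+1}$. Summing block sizes gives
\begin{equation*}
\bigl|A(b^{n+1} - 1)\bigr| \;=\; \sum_{k=0}^{n} \bigl(\lceil ab^k \rceil - b^k\bigr) \;=\; \frac{(a-1)(b^{n+1} - 1)}{b - 1} + O(n),
\end{equation*}
and dividing by $b^{n+1}$ yields the claimed value $(a-1)/(b-1)$. The main obstacle is the approximation step of the second paragraph: one must verify that the admissible range for $m$ remains nonempty of size $\Theta(b^k)$ even when $r$ is chosen near either endpoint of $I_d$, which requires unpacking both inequalities defining $I_d$ and tracking the $O(1)$ rounding error from replacing $rm$ by $\lfloor rm \rfloor$.
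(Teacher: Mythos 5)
Your argument is correct, and two of its three components---the non-density direction via the pairwise-separated intervals $I_d$, and the computation of $\underline{d}(A)$ by summing block sizes---coincide with the paper's. Where you diverge is the density proof for $b \le a^2$. The paper fixes the denominator to be the single element $b^k \in A$ and lets the numerator range over an entire block $[b^{j+k}, ab^{j+k}) \cap \N$, so the resulting quotients form an arithmetic progression of step $b^{-k}$ sweeping out all of $[b^j, ab^j)$; the complementary intervals $[b^j/a, b^j)$ are handled symmetrically. You instead fix the target $r \in I_d$, vary the denominator $m$, and take $n = \lfloor rm\rfloor$, which obliges you to check that the constraint $n \in [b^{k+d}, ab^{k+d})$ leaves at least one admissible $m$. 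That check---the step you flag as the main obstacle---does close: up to $O(1)$ the admissible $m$ form the interval $[\max(b^k,\, b^{k+d}/r),\ \min(ab^k,\, ab^{k+d}/r))$, whose length is $b^k(a - b^d/r)$ or $b^k(ab^d/r - 1)$ according as $r \le b^d$ or $r \ge b^d$, and both are positive precisely because of the strict inequalities $b^d/a < r < ab^d$. So your route is sound, but it costs a case analysis and a rounding estimate that the paper's fixed-denominator device avoids; the paper's version also makes the approximation error transparently $\le b^{-k}$, uniformly in how close $r$ sits to an endpoint of $I_d$. (One small slip: since $a$ and $b$ need not be integers, the block count is $\lceil ab^k\rceil - \lceil b^k\rceil$ rather than $\lceil ab^k\rceil - b^k$, but this is absorbed into your $O(n)$ term.)
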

	
	\begin{proof}
		We first compute $\underline{d}(A)$.
		Since the counting function $|A(x)|$ is nondecreasing on each interval 
		of the form $[b^k,a b^k)$ and constant on
		each interval of the form $[a b^k, b^{k+1})$, it follows that
		\begin{align}
			\liminf_{n\to\infty} \frac{ |A(n)|}{n} 
			&= \liminf_{n\to\infty} \frac{1}{b^n} \sum_{k=1}^{n-1} \big|[b^k,a b^k)\cap \N \big| \nonumber \\
			&= \lim_{n\to\infty} \frac{1}{b^n} \sum_{k=1}^{n-1} (a-1) b^k \label{eq:UFT}\\
			&=\lim_{n\to\infty} \frac{(a-1) (b^n-1)}{b^n(b - 1)} \nonumber \\
			&=\frac{a-1}{b-1}. \nonumber
		\end{align}	
		Note that in order to obtain \eqref{eq:UFT} we used the fact that the difference between
		$ab^k - b^k$ and $|[b^k,a b^k)\cap \N |$ is $\leq 1$.
		Having computed $\underline{d}(A)$, we now turn our attention to fractional density.
		\medskip

		\noindent\textsc{Case 1}:  Suppose that $a^2 < b$.
		By construction, each quotient of elements of $A$ belongs to an interval of the form
		$I_{\ell} = (a^{-1} b^{\ell} , a b^{\ell})$ for some integer $\ell$.  If $j < k$, 
		then $a^2 < b^{k-j}$ whence
		$a b^j < a^{-1} b^k$ so that every element of $I_j$ 
		is strictly less than every element of $I_k$.
		Therefore $R(A)$ contains no elements in any interval of the form
		$[a b^{\ell},a^{-1}b^{\ell+1}]$, each of which is nonempty since $a^2 < b$.
		Thus $A$ is not fractionally dense.
		\medskip

		\noindent\textsc{Case 2}: 
		Now let $b \leq a^2$, noting that 
		\begin{equation*}
			(0,\infty)  
			= \bigcup_{j \in\Z} \big[ \tfrac{b^j}{a}, b^j\big) \cup [b^j,a  b^j).
		\end{equation*}		
		Suppose that $\xi$ belongs to an interval of the form $[b^j,ab^j)$ for some integer $j$.
		Given $\epsilon >0$, let $k$ be so large that $1 < b^k \epsilon$ and observe that
		\begin{equation*}
			b^{j+k}\leq b^k\xi< ab^{j+k}.
		\end{equation*}
		Let $\ell$ be the unique natural number satisfying
		\begin{equation}\label{eq:bjk}
			b^{j+k} + \ell \leq b^k \xi < b^{j+k} + \ell + 1
		\end{equation}
		and
		\begin{equation}\label{eq:lab}
			0 \leq \ell \leq (a-1)b^{j+k}-1.
		\end{equation}
		From \eqref{eq:bjk} we find that
		\begin{equation*}
			0 \leq b^k \xi - (b^{j+k}+\ell) < 1,
		\end{equation*}
		from which it follows that
		\begin{equation*}
			0 \leq \xi - \frac{b^{j+k}+\ell}{b^k} < \epsilon.
		\end{equation*}
		Since \eqref{eq:lab} ensures that $b^{j+k} + \ell$ belongs to the interval $[b^{j+k} ,a b^{j+k})$,
		we conclude that 
		$(b^{j+k} + \ell)/b^k$ belongs to $R(A)$.
		A similar argument applies if $\xi$ belongs to an interval of the form 
		$[ \tfrac{b^j}{a}, b^j)$.  Putting this all together, we conclude that $A$ is fractionally dense.
	\end{proof}

	To see that Gem \ref{G1} follows from the preceding proposition, observe that
	if $a =2$ and $b \geq 2$ is an integer, then the set $A$ defined by 
	\eqref{eq:AB} is precisely the set of all natural numbers whose base-$b$ representation
	begins with the digit $1$.  The inequality $b \leq a^2$ holds precisely for the bases
	$b = 2,3,4$ and fails for all $b \geq 5$.

\section*{Critical density}	

	Having seen that there exist sets that are not fractionally dense, yet whose lower asymptotic density is positive,
	it is natural to ask whether there exists a critical value $0 < \kappa \leq 1$ such that $\kappa \leq \underline{d}(A)$ ensures that
	$A$ is fractionally dense.  The following gem establishes the existence of such a critical density, namely $\kappa = \frac{1}{2}$.

	\begin{Gem}
		If $\underline{d}(A) \geq \frac{1}{2}$, then $A$ 
		is fractionally dense.
		On the other hand, if $0 \leq \delta < \frac{1}{2}$, then
		there exists a subset $A \subset \N$ with $\underline{d}(A) = \delta$
		that is not fractionally dense.
	\end{Gem}

	Establishing this result will take some work, although we are now in a position to prove the second statement
	(originally obtained by Strauch and T\'oth \cite[Thm.~1]{ST} and by Harman \cite[p.~167]{Harman} using slightly different methods). 
	If $0 \leq \delta < \frac{1}{2}$, then
	we may write $\delta = \frac{1}{2+\epsilon}$ where $\epsilon > 0$.   Now let
	$a = 1 + \frac{\epsilon}{2}$ and $b = 1 + \epsilon + \frac{1}{2}\epsilon^2$
	so that $1 < a^2 < b$.  For these parameters, Proposition \ref{PropositionAB} tells us that the set $A$ defined by \eqref{eq:AB}
	satisfies $\underline{d}(A) = \delta$ and fails to be fractionally dense.
	If $\delta = 0$, then we note that the set $A = \{2^n:n\in \N\}$ is not fractionally dense and has $\underline{d}(A)=0$.
	
	Thus we can construct
	sets having lower asymptotic density arbitrarily close to $\frac{1}{2}$, yet which fail to be fractionally dense.
	On the other hand, it is clear that fractionally dense sets with $\underline{d}(A) = \frac{1}{2}$ exist.
	Indeed, one such example is $A = \{2,4,6,\ldots\}$.  However, the question of whether a non-fractionally
	dense set can have lower asymptotic density \emph{equal} to the critical density $\frac{1}{2}$ is much more difficult.

	In the late 1960s, 
	{\v{S}}al{\'a}t proposed an example of a set $A \subset \N$ which is not fractionally
	dense and such that $\underline{d}(A) = \frac{1}{2}$ \cite[p.~278]{Salat}.  
	However, {\v{S}}al{\'a}t's example was flawed, as pointed out in the associated corrigendum \cite{SalatC}.
	In 1998, Strauch and T\'oth established a more general result \cite[Thm.~2]{ST} which
	implies that a set satisfying $\underline{d}(A) \geq \frac{1}{2}$ must be fractionally dense
	(fortunately, the lengthy corrigendum \cite{STC} to this paper does not affect
	the result in question).  For $\underline{d}(A) > \frac{1}{2}$, the fractional density of $A$ 
	also follows from a sophisticated theorem in metric number theory \cite[Thm.~6.6]{Harman}
	(although Harman was kind enough to show us an elementary proof in this case).  
	The proof below, which covers the critical case $\underline{d}(A) = \frac{1}{2}$, 
	is essentially due to Strauch and T\'oth.

\begin{Theorem}\label{TheoremST}
	If $\underline{d}(A) \geq \frac{1}{2}$, then $A$ is fractionally dense.
\end{Theorem}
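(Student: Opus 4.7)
My plan is to argue by contradiction. Assume $\underline{d}(A) \geq \frac{1}{2}$ yet $R(A)$ fails to be dense in $[0, \infty)$. By the symmetry $R(A) = R(A)^{-1}$, we may pick a nonempty open interval $(\alpha, \beta) \subseteq (1, \infty)$ disjoint from $R(A)$. The crucial consequence is the forbidden-interval property: for every $a \in A$, $A \cap (\alpha a, \beta a) = \emptyset$, since any violating $a' \in A$ would give $a'/a \in (\alpha, \beta) \cap R(A)$.

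First I would derive the basic bound $\underline{d}(A) \leq \alpha/\beta$. Enumerating $A = \{a_1 < a_2 < \cdots\}$, the forbidden property gives $|A \cap [1, \beta a_n]| = |A \cap [1, \alpha a_n]| \leq \alpha a_n + 1$, so along the subsequence $\lfloor \beta a_n \rfloor$ the counting density is at most $\alpha/\beta + o(1)$. Combined with the hypothesis this already forces $\beta \leq 2\alpha$.

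The main work is to sharpen this via forbidden intervals across many scales. For large $N$ and each integer $\ell \geq 1$, set $a^{(\ell)} = \max(A \cap [1, N/\beta^\ell])$; the density hypothesis ensures $a^{(\ell)} \geq (\tfrac{1}{2} - o(1)) N/\beta^\ell$. Maximality of $a^{(\ell)}$ empties $A$ on $(a^{(\ell)}, N/\beta^\ell]$, and the forbidden property empties $A$ on $(\alpha a^{(\ell)}, \beta a^{(\ell)})$. Interleaving these emptied zones across all $\ell$ up to $\log_\beta N$, the portion of $[1, N]$ still accessible to $A$ is confined to a union of thin windows whose total length telescopes via a geometric series with ratio $1/\beta$ to at most $\frac{\alpha - 1}{\beta - 1}\, N + o(N)$. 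Hence $\underline{d}(A) \leq \frac{\alpha - 1}{\beta - 1}$, and the hypothesis forces $\beta \leq 2\alpha - 1$. Since $(\alpha - 1)^2 \geq 0$ gives $\alpha^2 \geq 2\alpha - 1$, this in turn forces $\beta \leq \alpha^2$.

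The final contradiction comes from observing that $\beta \leq \alpha^2$ is exactly the regime of Case~2 in Proposition~\ref{PropositionAB}: the approximation argument employed there, adapted to use only the abundance of elements in $A$ guaranteed by the density hypothesis, produces quotients of elements of $A$ inside every subinterval of $(\alpha, \beta)$, violating the non-density assumption. I expect the main obstacle to be the multi-scale telescoping estimate, since the $a^{(\ell)}$ will not in general coincide with $N/\beta^\ell$; one must carefully handle the overlaps between the different emptied zones and absorb boundary contributions near the minimum scale into the $o(N)$ term. This is the technical core of the Strauch--T\'oth argument.
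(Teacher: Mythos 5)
There is a genuine gap, and it sits exactly where the theorem's content lies. Your intermediate inequality $\underline{d}(A)\leq\frac{\alpha-1}{\beta-1}$ is in fact true (the cleanest route is not your multi-scale maxima but a grouping: set $b_1=\min A$ and $b_{j+1}=\min\{a\in A: a>\alpha b_j\}$, so that $b_{j+1}\geq\beta b_j$ and $A\subseteq\bigcup_j[b_j,\alpha b_j]$, then count at $x=b_J$; your telescoping as described is shaky because $a^{(\ell)}$ may be as small as roughly $\tfrac12 N/\beta^\ell$, so the zone $(\alpha a^{(\ell)},\beta a^{(\ell)})$ can overlap the maximality gap $(a^{(\ell)},N/\beta^\ell]$ and fail to reach $N/\beta^{\ell-1}$, and the worst case of the sum is not $\frac{\alpha-1}{\beta-1}N$). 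But even granting that inequality, it does not finish the proof: it only tells you $\beta\leq 2\alpha-1$, which is perfectly consistent with the standing assumptions (and when $\frac{\alpha-1}{\beta-1}=\frac12$ exactly, there is not even a contradiction at the boundary). Your final step --- ``adapt Case 2 of Proposition \ref{PropositionAB}'' --- is not an argument. That case analysis applies to the specific set $\bigcup_k[b^k,ab^k)$ and uses essentially that it contains \emph{every} integer in each block, so that any $\xi$ can be approximated by $(b^{j+k}+\ell)/b^k$ with numerator and denominator both in the set. For a general $A$ with $\underline{d}(A)\geq\frac12$ you control neither the numerator nor the denominator, and the case $1<\alpha<\beta\leq 2\alpha-1$ is precisely the hard case; asserting that quotients fill $(\alpha,\beta)$ there is assuming the conclusion.

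The missing idea, which is how the paper (following Strauch and T\'oth) proceeds, is to exploit \emph{many} forbidden intervals at comparable heights rather than one per geometric scale. Normalizing the gap to $0<\alpha<\beta\leq1$, one places inside $[0,\beta a_{kn}]\setminus A$ the full gap $(\alpha a_{kn},\beta a_{kn})$ together with the pairwise disjoint truncated gaps $\bigl(\alpha a_{ki},\alpha(a_{ki}+k)\bigr)$ for about $(1-\theta)n$ indices $i$; counting then yields $\bigl(1+\frac{\alpha}{\beta}\bigr)\underline{d}(A)\leq\frac{\alpha}{\beta}$, i.e., $\underline{d}(A)\leq\frac{\alpha}{\alpha+\beta}<\frac12$, which contradicts the hypothesis outright --- including in the critical case. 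Your scheme harvests only $O(\log N)$ forbidden intervals up to height $N$, which is why it cannot get below the $\frac{\alpha-1}{\beta-1}$ threshold and leaves the main case open.
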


\begin{proof}
	Let $\underline{d}(A) \geq \frac{1}{2}$ and suppose toward a contradiction
	that $0<\alpha<\beta\leq 1$ and $R(A) \cap (\alpha,\beta) = \varnothing$.
	Noting that $A$ must be infinite, we enumerate the elements of $A$
	in increasing order $a_1 < a_2 < \cdots$.  Let $k$ be a natural number which is
	so large that $k\alpha>1$ and let $0 < \theta <1$.
	For all $m$ and $n$ in $\N$, let
	\begin{equation*}
		J_m^n=\big(\alpha a_{k(\lfloor\theta n\rfloor+m)},\alpha (a_{k(\lfloor\theta n\rfloor+m)}+k)\big).
	\end{equation*}
	
	For each $n$ in $\N$, we claim that the intervals
	\begin{equation}
		\label{eq-intervals}
		J_0^n,\, J_1^n,\ldots,\, J_{n-\lfloor\theta n\rfloor-1}^n,\, (\alpha a_{kn},\beta a_{kn}),
	\end{equation}
	used below in a delicate counting argument, are pairwise disjoint.  Indeed, since the $a_i$ are integers it follows that
	\begin{equation*}
		\alpha (a_{k(\lfloor\theta n\rfloor+m)}+k)\leq 
		\alpha (a_{k(\lfloor\theta n\rfloor+m)+k}) = \alpha(a_{k(\lfloor\theta n\rfloor+m+1)}),
	\end{equation*}
	so that the right endpoint of $J_m^n$ is at most the left endpoint of $J_{m+1}^n$. A similar argument shows that the right endpoint of $J_{n-\lfloor\theta n\rfloor-1}^n$ is at most $\alpha a_{kn}$.
	
	Next, let $n$ be so large that $\floor{\theta n}\geq \frac{\alpha}{\beta-\alpha}$. We claim that
	\begin{equation}\label{eq-containment}
		J_m^n\subseteq \big(\alpha a_{k(\lfloor\theta n\rfloor+m)}, \beta a_{k(\lfloor\theta n\rfloor+m)} \big)
	\end{equation}
	for each $m=0,1,\ldots, n-\lfloor\theta n\rfloor-1$.  To see this, note that
	\begin{equation*}
		\frac{k\alpha}{\beta - \alpha} \leq k\floor{\theta n} \leq a_{k \floor{\theta n}} < a_{k \floor{\theta n}+m},
	\end{equation*}
	which implies that $\alpha (a_{k(\lfloor\theta n\rfloor+m)}+k)\leq \beta a_{k(\lfloor\theta n\rfloor+m)}$
	so that \eqref{eq-containment} holds.
	
	Since $(\alpha a_n,\beta a_n)\cap A = \varnothing$ by assumption, it follows now that the intervals \eqref{eq-intervals} 
	are contained in the complement $[0,\infty) \backslash A$.  Letting $B = \N\setminus A$, a na\"ive counting argument gives
	\begin{equation}  \label{eq-mainineq}
		|B(\beta a_{kn})|
		\geq \big((\beta-\alpha)a_{kn}-1\big)+(n-\lfloor \theta n\rfloor)(k\alpha-1).\nonumber
	\end{equation}
	Dividing through by $\beta a_{kn}$ and taking lim sups yields
	\begin{align*}
		1 - \underline{d}(A)
		&= \limsup_{n\to\infty} \frac{|B(n)|}{n} \\
		&\geq \limsup_{n\to\infty}  \frac{|B (\beta a_{kn} )| }{ \beta a_{kn} }\\
		&\geq\limsup_{n\to\infty}\left( \frac{(\beta-\alpha)a_{kn}-1}{\beta a_{kn}}+\frac{(n-\lfloor \theta n\rfloor)(k\alpha-1)}{\beta a_{kn}}\right) \\
		&\geq \frac{\beta-\alpha}{\beta}+\liminf_{n\to\infty} \frac{(n-\lfloor \theta n\rfloor)(k\alpha-1)}{\beta a_{kn}} \\
		&\geq 1-\frac{\alpha}{\beta}+(1-\theta)\liminf_{n\to\infty} \frac{\alpha kn -n}{\beta a_{kn}} \\
		&\geq  1-\frac{\alpha}{\beta}+(1-\theta)\left( \frac{\alpha}{\beta} \liminf_{n\to\infty} \frac{kn}{a_{kn}} 
		- \frac{1}{\beta k}  \limsup_{n\to\infty} \frac{kn}{a_{kn}}\right)\\
		&\geq  1-\frac{\alpha}{\beta}+(1-\theta)\left( \frac{\alpha}{\beta} \underline{d}(A) 
		- \frac{1}{\beta k}\right).
	\end{align*}
	Taking the limit as $\theta\to 0$ and $k\to\infty$ and rearranging gives
	\begin{equation}\label{eq-dab}
		\left(1+\frac{\alpha}{\beta}\right) \underline{d}(A) \leq \frac{\alpha}{\beta}.
	\end{equation}
	If $\underline{d}(A) \geq \frac{1}{2}$, then the preceding inequality implies that $\beta \leq \alpha$,
	a contradiction.
\end{proof}

The astute reader will note that we have actually shown that if 
$(\alpha,\beta)\cap R(A)=\varnothing$, then \eqref{eq-dab} must hold.
In fact, with only a little more work one can show that
\begin{equation*}
	\underline{d}(A)\leq \frac{\alpha}{\beta} \min\big\{\overline{d}(A),1-\overline{d}(A)\big\}
\end{equation*}
and $\overline{d}(A) \leq 1 - (\beta - \alpha)$,
where 
\begin{equation*}
	\overline{d}(A) = \limsup_{n\to\infty} \frac{|A(n)|}{n}
\end{equation*}
denotes the \emph{upper asymptotic density} of $A$ \cite[Thm.~2]{ST}.
Moreover, it is also known that $\underline{d}(A) + \overline{d}(A) \geq 1$ implies that
$A$ is fractionally dense \cite[p.~71]{ST}.

\section*{Partitions of $\N$}
	Clearly $\N$ itself is fractionally dense since $R(\N) = \mathbb{Q} \cap (0,\infty)$, the set
	of positive rational numbers.  An interesting question now presents itself.  If $\N$ is partitioned
	into a finite number of disjoint subsets, does one of these subsets have to be fractionally dense?  
	The following result gives a complete answer to this problem.

	\begin{Gem}\label{G3}
		One can partition $\N$ into three sets, each of which is not fractionally
		dense.  However, such a partition is impossible using only two sets.
	\end{Gem}

	This gem is due to Bukor, \v{S}al\'{a}t, and T\'oth \cite{BST}.  These three, along with
	P.~Erd\H{o}s, later generalized the second statement by showing
	that if the set $A$ is presented as an increasing sequence $a_1 < a_2 <\cdots$ satisfying
	$\lim_{n\to\infty} a_{n+1}/a_n = 1$, then for each $B \subseteq A$, either
	$B$ or $A\backslash B$ is fractionally dense \cite{Erdos}.
	
	The proof of our third gem is contained in the following two results.
	
	\begin{Proposition}\label{PropositionClean}
		There exist disjoint sets $A,B,C \subset \N$, none of which are fractionally dense,
		such that $\N = A\cup B \cup C$.
	\end{Proposition}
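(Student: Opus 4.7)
The plan is to partition $\N$ into three geometrically-scaled pieces, each of which has the structure needed to invoke (a mild variant of) Case 1 of Proposition \ref{PropositionAB}. Fix $a=2$ and $b=8$, so that the crucial inequality $a^2 < b$ holds, and define
\begin{align*}
A &= \bigcup_{k=0}^{\infty} [8^k, 2\cdot 8^k) \cap \N, \\
B &= \bigcup_{k=0}^{\infty} [2\cdot 8^k, 4\cdot 8^k) \cap \N, \\
C &= \bigcup_{k=0}^{\infty} [4\cdot 8^k, 8^{k+1}) \cap \N.
\end{align*}
By inspection these three sets are pairwise disjoint, and since $\N = \bigcup_{k\geq 0}[8^k,8^{k+1})\cap \N$, they partition $\N$.

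Non-fractional density of $A$ is immediate from Proposition \ref{PropositionAB} with parameters $a=2$, $b=8$. For $B$ and $C$ the argument of Case 1 of that proposition applies with only cosmetic changes. Indeed, if $x,y \in B$ with $x \in [2\cdot 8^k, 4\cdot 8^k)$ and $y \in [2\cdot 8^j, 4\cdot 8^j)$, then
\begin{equation*}
\frac{x}{y} \in \left(\frac{8^{k-j}}{2},\, 2\cdot 8^{k-j}\right),
\end{equation*}
since the scaling constant $2$ cancels. Because $a^2 = 4 < 8 = b$, the intervals $(8^\ell/2,\, 2\cdot 8^\ell)$ for distinct $\ell \in \Z$ are pairwise disjoint, so $R(B)$ contains no point of any interval of the form $[2\cdot 8^\ell,\, 8^{\ell+1}/2]$. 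The same reasoning, with the scaling constant $4$ cancelling, shows that $R(C)$ suffers identical gaps. Hence none of $A$, $B$, $C$ is fractionally dense.

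The main obstacle is essentially notational rather than mathematical: Proposition \ref{PropositionAB} is stated only for sets of the form \eqref{eq:AB} beginning at $b^k$, whereas $B$ and $C$ begin at $2\cdot 8^k$ and $4\cdot 8^k$ respectively. One must therefore either observe that a multiplicative scaling of all block endpoints by a common constant leaves the ratio set $R(\cdot)$ unchanged (so that Case 1 of the proof applies verbatim), or simply re-run that short case with the shifted endpoints. Either way, the real content is the gap condition $b > a^2$, which is why $b=8$ (and more generally any $b = a^3$ with $a\in\N$, $a\geq 2$) provides enough room to fit three geometric pieces inside a single block.
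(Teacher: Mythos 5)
Your proof is correct and follows essentially the same approach as the paper, which partitions $\N$ by leading base-$5$ digit into $\bigcup_k[5^k,2\cdot 5^k)$, $\bigcup_k[2\cdot 5^k,3\cdot5^k)$, $\bigcup_k[3\cdot 5^k,5^{k+1})$ and verifies the same interval-separation (gap) argument for each piece; your choice of base $8$ with three blocks of equal multiplicative width $2$ is just a different selection of constants satisfying the same condition that each block's width ratio squared is less than the base.
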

	
	\begin{proof}
		Let
		\begin{equation*}
			A = \bigcup_{k=0}^{\infty} \big[5^k, 2\cdot 5^k\big) \cap \N, \quad
			B = \bigcup_{k=0}^{\infty} \big[2\cdot 5^k,3\cdot 5^k\big) \cap \N, \quad
			C = \bigcup_{k=0}^{\infty} \big[3\cdot 5^k, 5\cdot 5^k\big) \cap \N,
		\end{equation*}
		so that $A$, $B$, and $C$
		consist of those natural numbers whose base-$5$ expansions begin, respectively, with
		$1$, with $2$, and with $3$ or $4$.
		We consider only $C$ here, for the remaining two cases are similar
		(in fact, the set $A$ is already covered by Proposition \ref{PropositionAB}).
		Observe that each quotient of elements of $C$ is contained in an interval of the form
		$I_{\ell} = \left( \frac{3}{5} 5^{\ell}, \frac{5}{3} 5^{\ell} \right)$ for some integer $\ell$.
		If $j < k$, then every element of $I_j$ is strictly less than every element of $I_k$ since
		$\frac{5}{3}\cdot 5^j < \frac{3}{5}\cdot 5^k$ holds if and only if
		$\frac{25}{9} < 5^{k-j}$.  Thus $R(C) \cap [ \frac{5}{3} 5^{\ell}, \frac{3}{5} 5^{\ell+1} ] = \varnothing$
		for any integer $\ell$.
	\end{proof}

	\begin{Theorem}\label{TheoremBST}
		If $A$ and $B$ are disjoint sets such that $\N = A \cup B$, then at least one of $A,B$ is 
		fractionally dense.
	\end{Theorem}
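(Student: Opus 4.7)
The plan is to argue by contradiction, extracting the contradiction from the partition identities
\[
\overline{d}(A)+\underline{d}(B)=\overline{d}(B)+\underline{d}(A)=1,
\]
which follow immediately by passing to $\limsup$ and $\liminf$ in the trivial equality $|A(n)|+|B(n)|=n$. Suppose $\mathbb{N}=A\cup B$ is a disjoint partition in which neither $A$ nor $B$ is fractionally dense. Since $1\in R(A)\cap R(B)$ and both quotient sets are invariant under $x\mapsto 1/x$, any gap in either can be pulled into $(0,1]$, so I may fix open intervals $(\alpha_1,\beta_1),(\alpha_2,\beta_2)\subseteq(0,1]$ with $R(A)\cap(\alpha_1,\beta_1)=R(B)\cap(\alpha_2,\beta_2)=\varnothing$.

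By Theorem~\ref{TheoremST}, $\underline{d}(A),\underline{d}(B)<\tfrac{1}{2}$, and the partition identities then force $\overline{d}(A),\overline{d}(B)>\tfrac{1}{2}$. Next I would invoke the sharpened bound recorded in the remark after Theorem~\ref{TheoremST},
\[
\underline{d}(X)\,\leq\,\tfrac{\alpha}{\beta}\,\min\bigl\{\overline{d}(X),\,1-\overline{d}(X)\bigr\},
\]
applied to $X=A$ with the interval $(\alpha_1,\beta_1)$: because $\overline{d}(A)>\tfrac{1}{2}$, the minimum equals $1-\overline{d}(A)=\underline{d}(B)$, giving $\underline{d}(A)\leq(\alpha_1/\beta_1)\underline{d}(B)$. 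By symmetry, $\underline{d}(B)\leq(\alpha_2/\beta_2)\underline{d}(A)$, and chaining yields
\[
\underline{d}(A)\,\leq\,\tfrac{\alpha_1\alpha_2}{\beta_1\beta_2}\,\underline{d}(A).
\]
Since the constant is strictly less than $1$ and $\underline{d}(A)\geq 0$, both $\underline{d}(A)$ and $\underline{d}(B)$ must vanish. But then $\overline{d}(A)=1-\underline{d}(B)=1$, contradicting the companion inequality $\overline{d}(A)\leq 1-(\beta_1-\alpha_1)<1$ also mentioned in that remark.

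The main obstacle is substantiating the two sharpened density inequalities, which are only alluded to in passing in the excerpt. Both are refinements of the counting argument behind Theorem~\ref{TheoremST}: to obtain the lower-density bound one separates $\underline{d}(A)$ from $\overline{d}(A)$ throughout the chain of limits (rather than using the crude estimate $\limsup(kn/a_{kn})\leq 1$ from the proof of Theorem~\ref{TheoremST}), so that the factor of $\overline{d}(A)$ survives the $k\to\infty,\theta\to 0$ limit; to obtain the upper-density bound one observes that each $a\in A$ forces the roughly $a(\beta-\alpha)/(\alpha\beta)$ consecutive integers in the interval $(a/\beta,a/\alpha)$ to lie in $B$, and extracts the density estimate from the densest scale. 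Neither refinement requires ideas beyond the careful bookkeeping already present in the proof of Theorem~\ref{TheoremST}, so once they are granted the contradiction reduces to the short calculation above.
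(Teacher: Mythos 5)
Your reduction is clean and, granted its two inputs, logically airtight: the chain $\underline{d}(A)\le\tfrac{\alpha_1}{\beta_1}\underline{d}(B)\le\tfrac{\alpha_1\alpha_2}{\beta_1\beta_2}\underline{d}(A)$ forces $\underline{d}(A)=\underline{d}(B)=0$, whence $\overline{d}(A)=1-\underline{d}(B)=1$ collides with $\overline{d}(A)\le 1-(\beta_1-\alpha_1)<1$. (Incidentally, you do not need Theorem~\ref{TheoremST} to identify the minimum: $\min\{\overline{d}(A),1-\overline{d}(A)\}\le 1-\overline{d}(A)$ holds unconditionally.) The genuine gap is that the entire difficulty of the theorem has been outsourced to the two Strauch--T\'oth inequalities, which the paper only quotes without proof and which you yourself flag as ``the main obstacle'' without closing. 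The second one, $\overline{d}(A)\le 1-(\beta-\alpha)$, is indeed a short argument (realize $\overline{d}(A)$ along a subsequence of elements $a\in A$ and note that the $A$-free interval $(\alpha a,\beta a)$ lies below $a$). But the first one, $\underline{d}(A)\le\tfrac{\alpha}{\beta}(1-\overline{d}(A))=\tfrac{\alpha}{\beta}\,\underline{d}(B)$, is \emph{not} obtained by ``keeping track of $\overline{d}$ versus $\underline{d}$'' in the chain of limits from Theorem~\ref{TheoremST}: that bookkeeping estimates $\limsup_n|B(n)|/n=1-\underline{d}(A)$ from below along the special subsequence $\beta a_{kn}$ and can only ever yield \eqref{eq-dab}, i.e., $\underline{d}(A)\le\tfrac{\alpha}{\beta}(1-\underline{d}(A))$. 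To get $\underline{d}(B)$ on the right you need a lower bound for $|B(m)|/m$ valid for \emph{every} large $m$, which means rebuilding the interval construction around an arbitrary cutoff $m$ rather than around the points $a_{kn}$. That is new work, not a refinement of existing bookkeeping, and without this inequality your remaining facts are mutually consistent (e.g., $\underline{d}(A)=\underline{d}(B)=0.3$, $\overline{d}(A)=\overline{d}(B)=0.7$ violates nothing else you use), so no contradiction follows.

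For comparison, the paper's proof is completely different and self-contained: it never mentions density. One picks $n\in A$ with $n+1\in B$ and $n$ large, sets $s=\lfloor n/(\alpha\beta)\rfloor-1$, and shows that whichever of $A$, $B$ contains $s$, one of the quotients $\lfloor\alpha s\rfloor/s$, $(n+1)/\lfloor\alpha s\rfloor$, $\lfloor\beta s\rfloor/s$, $n/\lfloor\beta s\rfloor$ is forced into a forbidden interval. If you wish to keep your density-theoretic route, you must either prove the first Strauch--T\'oth inequality in full or cite \cite{ST} as an external black box; as written, your argument is a correct deduction from an unproven premise.
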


	\begin{proof}
		Without loss of generality, we may assume that both $A$ and $B$ are infinite.
		Suppose toward a contradiction that neither $A$ nor $B$ is fractionally dense.
		Thus there exists $\alpha,\beta>1$ and $\epsilon > 0$ such that
		$(\alpha-\epsilon, \alpha+\epsilon) \cap R(A)$ and 
		$(\beta-\epsilon,\beta+\epsilon) \cap R(B)$ are both empty.
		Now let $n_0 \in \N$ be such that
		\begin{equation*}
			\frac{1+\alpha +\beta +2\alpha \beta}{n_0} < \epsilon.
		\end{equation*}
		
		Since $A$ and $B$ are both infinite, there exists
		$n > \alpha \beta(n_0 + 1)$ such that $n \in A$ and $n+1 \in B$.
		If $s = \lfloor \frac{n}{\alpha \beta} \rfloor - 1$ belongs to $A$, then setting
		$t = \floor{\alpha s}$ yields
		\begin{equation}\label{eq-Before}
			\left| \frac{t}{s} - \alpha \right| 
			= \left| \frac{ \floor{\alpha s}-\alpha s}{s}\right| 
			< \frac{1}{s} \leq \frac{1}{n_0} < \epsilon.
		\end{equation}
		Since $(\alpha-\epsilon, \alpha+\epsilon) \cap R(A) = \varnothing$,
		we conclude that $t$ belongs to $B$.  Now observe that 
		\begin{align*}
			\left| \frac{n+1}{t} - \beta  \right|
			&=  \frac{n+1 - \beta  \floor{\alpha s} }{t}  \\
			&< \frac{n+1-\beta (\alpha s-1)}{t} \\
			&= \frac{1+\beta + \alpha \beta +n - \alpha \beta \floor{ \frac{n}{\alpha \beta } } }{t} \\
			&< \frac{1+\beta +2\alpha \beta }{t} \\
			&\leq \frac{1+\beta +2\alpha \beta }{n_0} \\
			&< \epsilon. 
		\end{align*}
		Hence $\frac{n+1}{t}$ belongs to $(\beta-\epsilon, \beta+\epsilon) \cap R(B)$, which is a contradiction.
		Therefore it must be the case that $s$ belongs to $B$.
	
		Assuming now that $s = \lfloor \frac{n}{\alpha \beta} \rfloor - 1$ belongs to $B$, we now let
		$t = \floor{\beta s}$.  Proceeding as in \eqref{eq-Before}, one can show that 
		$| \frac{t}{s}-\beta| < \epsilon$ and hence that $t$ belongs to $A$.  Moreover,	
		\begin{align*}
			\left| \frac{n}{t} - \alpha \right| 
			&= \left| \frac{n - \alpha \floor{\beta s}}{t} \right| \\
			&< \frac{n - \alpha (\beta s-1)}{t} \\
			&= \frac{\alpha + n - \alpha \beta s}{t} \\
			&= \frac{\alpha + n - \alpha \beta (\floor{ \frac{n}{\alpha \beta} } - 1)}{t} \\
			&=  \frac{\alpha +  \alpha \beta + (n - \alpha \beta \floor{ \frac{n}{\alpha \beta} } )}{t}  \\
			&< \frac{\alpha +2\alpha \beta}{n_0} \\
			&< \epsilon.
		\end{align*}
		Thus $\frac{n}{t}$ belongs to $(\alpha - \epsilon, \alpha + \epsilon) \cap R(A)$,
		a contradiction which implies that $s$ belongs to neither $A$ nor $B$, an absurdity
		since $\N = A \cup B$.
	\end{proof}

	Although one might be tempted to postulate a relationship between Theorems \ref{TheoremST}
	and \ref{TheoremBST}, lower asymptotic density has no bearing on the
	preceding result since it is possible to partition $\N$ into two subsets both having lower asymptotic density zero.

	\begin{Proposition}
		There exist disjoint sets $A,B \subset \N$ such that $\N = A \cup B$,
		$\underline{d}(A) = \underline{d}(B) = 0$, and $\overline{d}(A) = \overline{d}(B) = 1$. 
	\end{Proposition}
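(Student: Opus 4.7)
The plan is to construct $A$ and $B$ by partitioning $\N$ into consecutive blocks whose lengths grow so rapidly that the most recent block dominates all earlier ones, then alternately assigning blocks to $A$ and to $B$. Concretely, I would fix an increasing sequence of natural numbers $n_0 = 0 < n_1 < n_2 < \cdots$ with $n_{k-1}/n_k \to 0$ (for instance $n_k = k!$), and set
\begin{equation*}
A = \bigcup_{k \text{ odd}} (n_{k-1}, n_k] \cap \N,
\qquad
B = \bigcup_{k \text{ even}} (n_{k-1}, n_k] \cap \N.
\end{equation*}
By construction these are disjoint and their union is $\N$.

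Next I would estimate $|A(n_k)|$ at the two relevant sequences of sample points. When $k$ is odd, the entire block $(n_{k-1}, n_k]$ lies in $A$, so $|A(n_k)| \geq n_k - n_{k-1}$ and
\begin{equation*}
\frac{|A(n_k)|}{n_k} \;\geq\; 1 - \frac{n_{k-1}}{n_k} \;\longrightarrow\; 1,
\end{equation*}
whence $\overline{d}(A) = 1$. When $k$ is even, no element of $(n_{k-1}, n_k]$ belongs to $A$, so $|A(n_k)| = |A(n_{k-1})| \leq n_{k-1}$ and
\begin{equation*}
\frac{|A(n_k)|}{n_k} \;\leq\; \frac{n_{k-1}}{n_k} \;\longrightarrow\; 0,
\end{equation*}
giving $\underline{d}(A) = 0$.

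To finish, I would invoke the identity $|A(n)| + |B(n)| = n$, valid for all $n$ because $A$ and $B$ partition $\N$. Dividing by $n$ and taking $\limsup$ and $\liminf$ yields $\overline{d}(B) = 1 - \underline{d}(A) = 1$ and $\underline{d}(B) = 1 - \overline{d}(A) = 0$, which completes the proof. There is no real obstacle here beyond choosing the block sizes to grow quickly enough so that $n_{k-1}/n_k \to 0$; once that is in hand, the rest is bookkeeping.
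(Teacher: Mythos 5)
Your proposal is correct, and the underlying construction (alternating blocks of factorially growing length) is the same as the paper's; the difference lies in how the densities are verified. The paper invokes the Stolz--Ces\`aro theorem to evaluate the limits of $|A(x_n)|/x_n$ and $|A(y_n)|/y_n$ along the block endpoints, and then repeats the whole argument for $B$. You instead observe that since $\liminf$ and $\limsup$ are a priori trapped in $[0,1]$, it suffices to exhibit one subsequence where the ratio tends to $1$ and one where it tends to $0$, which follows from the one-line bounds $1 - n_{k-1}/n_k \leq |A(n_k)|/n_k \leq 1$ and $|A(n_k)|/n_k \leq n_{k-1}/n_k$; you then dispatch $B$ for free via $|A(n)|+|B(n)|=n$ and the identity $\limsup(1-c_n)=1-\liminf c_n$. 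This is more elementary (no Stolz--Ces\`aro needed) and also more general, since it works for any endpoints with $n_{k-1}/n_k \to 0$ rather than for the specific factorial sums. One trivial bookkeeping point: with $n_k=k!$ you do not literally get $n_0=0<n_1<n_2$ (since $0!=1!=1$); shift the index or take $n_k=(k+1)!$ to make the sequence strictly increasing from $n_0=0$.
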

	
	\begin{proof}
		We require the Stolz-Ces\`aro Theorem \cite{Gelca}, which tells us that if $x_n$ and $y_n$ 
		are two increasing and unbounded sequences of real numbers, then
		\begin{equation*}
			\lim_{n\to\infty} \frac{x_{n+1} - x_n}{y_{n+1} - y_n} = L
			\quad\Longrightarrow\quad \lim_{n\to\infty} \frac{x_n}{y_n} = L.
		\end{equation*}
		Now put the first $1!$ natural numbers into $A$,
		then the next $2!$ into $B$, the next $3!$ into $A$, and so forth, yielding the sets
		\begin{equation*}
			A=\{1,4,5,6,7,8,9,34,...\},\qquad B=\{2,3,10,11,12,\ldots,32,33,154,155,\ldots\}.
		\end{equation*}
		By construction, $A \cap B = \varnothing$ and $\N = A \cup B$.
		Let $x_n = \sum_{k=1}^{2n-1} k!$ and $y_n = \sum_{k=1}^{2n} k!$, observing that
		$|A(x_n)| = |A(y_n)| = \sum_{k=1}^n (2k-1)!$.  Since
		\begin{equation*}
			\frac{ |A(y_{n+1})| - |A(y_n)|}{y_{n+1} - y_n} = \frac{ (2n+1)! }{ (2n+2)! + (2n+1)!} 
			= \frac{1}{2n+3} \to 0
		\end{equation*}
		and
		\begin{equation*}
			\frac{ |A(x_{n+1})| - |A(x_n)|}{x_{n+1} - x_n} = \frac{ (2n+1)!}{(2n+1)! + (2n)!} 
			= \frac{1}{1+ \frac{1}{2n+1}} \to 1,
		\end{equation*}
		it follows from the Stolz-Ces\`aro Theorem that $\underline{d}(A) = 0$ and $\overline{d}(A) = 1$.
		An analogous argument using $x_n = \sum_{k=1}^{2n} k!$ and $y_n = \sum_{k=1}^{2n+1} k!$ 
		now reveals that $\underline{d}(B) = 0$ and $\overline{d}(B) = 1$ as well.
	\end{proof}

\section*{Arithmetic progressions}

	Our final gem concerns arithmetic progressions of natural numbers.
	Recall that an \emph{arithmetic progression} with common difference $b$ and length $n$
	is a sequence of the form $a, a+b, a+2b,\ldots, a+(n-1)b$.  Subsets of the natural numbers 
	that contain arbitrarily long arithmetic progressions are often thought of as
	being ``thick'' or ``dense'' in some qualitative sense.  
	
	\begin{Gem}\label{G4}
		There exists a subset of $\N$ containing arbitrarily long arithmetic progressions, yet that
		is not fractionally dense.  On the other hand, there exists a fractionally dense set that
		has no arithmetic progressions of length $\geq 3$.
	\end{Gem}

The first statement of Gem \ref{G4} has already been proven.  Indeed, Proposition \ref{PropositionAB}
produces sets that are not fractionally dense and that 
contain arbitrarily long blocks of consecutive natural numbers.  We therefore need
only produce a fractionally dense set having no arithmetic progressions of length three
(see also  \cite[Thm.~2]{Hedman}).

\begin{Proposition}
	The set $A = \{ 2^j : j \geq 2\} \cup \{ 3^k : k \geq 2\}$ is fractionally dense
	and has no arithmetic progressions of length three.
\end{Proposition}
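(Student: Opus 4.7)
The statement has two essentially independent parts, and I would prove them by different methods: a parity and unique factorization argument for the arithmetic progression claim, and Kronecker's theorem for the fractional density claim.

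For the absence of three-term progressions, suppose toward a contradiction that $x < y < z$ in $A$ satisfy $x + z = 2y$. Every $2^j \in A$ has $j \geq 2$ and is even, while every $3^k \in A$ is odd, so the evenness of $x+z$ forces $x$ and $z$ to share a parity, hence to lie in the same piece of $A$. If both are powers of $2$, write $x = 2^a$ and $z = 2^c$ with $2 \leq a < c$, so $x + z = 2^a(1 + 2^{c-a})$ has $2$-adic valuation $a \geq 2$ and odd part $1 + 2^{c-a} \geq 3$; this matches neither $2y = 2^{b+1}$ (odd part $1$) nor $2y = 2 \cdot 3^b$ (valuation $1$). If both are powers of $3$, then $x + z = 3^a(1 + 3^{c-a})$, whose second factor is even and coprime to $3$; matching $2y = 2^{b+1}$ fails since $3^a$ with $a \geq 2$ divides the left side, and matching $2y = 2\cdot 3^b$ forces $a = b$ and $1 + 3^{c-a} = 2$, hence $c = a$, a contradiction.

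For fractional density, I would prove the stronger statement that the subset $\{2^j/3^k : j,k \geq 2\} \subseteq R(A)$ is already dense in $(0,\infty)$. Taking logarithms, this reduces to showing that $\{j\log 2 - k\log 3 : j, k \geq 2\}$ is dense in $\R$. Setting $\alpha = \log 2/\log 3$, which is irrational by unique factorization, Kronecker's theorem furnishes, for any $r \in \R$ and any $\varepsilon > 0$, arbitrarily large $j \in \N$ and an integer $k$ with $|j\alpha - k - r/\log 3| < \varepsilon/\log 3$. Choosing $j$ sufficiently large guarantees $j \geq 2$ and, since $k \approx j\alpha - r/\log 3$, also $k \geq 2$, yielding a quotient $2^j/3^k \in R(A)$ within $\varepsilon$ of $e^{r}$ after exponentiating.

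Each half of the argument is essentially mechanical once the framework is in place; the only mild subtlety is coordinating the two-sided lower bound $j, k \geq 2$ in the Kronecker step, which is painlessly handled by taking $j$ arbitrarily large.
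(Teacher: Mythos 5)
Your proof is correct and follows essentially the same route as the paper: Kronecker's theorem applied to the irrationality of $\log 2/\log 3$ gives density of the quotients $2^j/3^k$ (the paper uses $\beta=\log_2 3$ and approximates $3^n/2^m$, which is the same argument), and a parity-plus-unique-factorization argument rules out three-term progressions. The only cosmetic difference is that you organize the progression argument around the identity $x+z=2y$ and compare $2$-adic and $3$-adic valuations directly, whereas the paper computes the common difference $b$ explicitly and reaches the same congruence obstructions; your explicit handling of the constraint $j,k\geq 2$ in the Kronecker step is a small point the paper glosses over.
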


\begin{proof}
	First recall that Kronecker's approximation 
	theorem \cite[Thm.~440]{Hardy} asserts that 
	if $\beta > 0$ is irrational, $\alpha \in \R$, and $\delta > 0$, then there exist
	$n,m \in \N$ such that $| n \beta - \alpha - m| < \delta$.
	Let $\xi,\epsilon >0$ and note that $\beta = \log_2 3 >0$ is irrational.
	By the continuity of $f(x) = 2^x$ at $\log_2 \xi$, there exists $\delta > 0$ such that
	\begin{equation}\label{eq-EAED}
		| \log_2 x - \log_2 \xi| < \delta \quad\implies\quad | x - \xi | < \epsilon.
	\end{equation}		
	Kronecker's theorem with $\beta = \log_2 3$
	and $\alpha = \log_2 \xi$ now yields $n,m \in \N$ so that
	\begin{equation*}
		\left| \log_2 \left(\frac{3^n}{2^m}\right) - \log_2 \xi \right| \,\,=\,\,
		 | n  \log_2 3 - \log_2 \xi - m| < \delta.
	\end{equation*}
	In light of \eqref{eq-EAED}, it follows that $|3^n /2^m - \xi| < \epsilon$, whence 
	$A$ is fractionally dense.
	
	We now show that $A$ contains no arithmetic progressions of length three.
	Assume toward a contradiction that such an arithmetic progression exists.
	By the definition of $A$, the first term in this progression is either a power of $2$
	or a power of $3$.  We consider both cases separately, letting $b$ denote the common difference
	in each progression.
	\medskip
	
	\noindent\textsc{Case 1}:	
	Suppose that $2^j$, $2^j+b$, and $2^j+2b$ belong to $A$.
	Since $2^j+2b$ is even and belongs to $A$, 
	it must be of the form $2^k$ for some $k > j$ whence $b = 2^{k-1} - 2^{j-1}$ is even since
	$j \geq 2$.  Thus $2^j+b$ must also be of the form $2^{\ell}$ for some $\ell > j$ so that
	\begin{equation*}
		2^{\ell} = 2^j+b = 2^j + (2^{k-1} - 2^{j-1}) = 2^{j-1}(2^{k-j} +1).
	\end{equation*}
	Upon dividing the preceding by $2^{j-1}$ we obtain a contradiction.
	\medskip
	
	\noindent\textsc{Case 2}:	
	Now suppose that $3^j$, $3^j+b$, $3^j + 2b$
	is an arithmetic progression in $A$ of length three that begins with $3^j$.  In this case
	$3^j+2b$ is odd and hence must be of the form $3^k$ for some $k > j$.  Therefore
	\begin{equation*}
		b = \frac{3^k - 3^j}{2} = 3^j\frac{(3^{k-j}-1)}{2},
	\end{equation*}
	so that the second term $3^j+b$ in our progression is divisible by $3^j$.  Thus 
	$3^j + b = 3^{\ell}$ for some $\ell > j$, from which it follows that
	\begin{equation*}
		3^{\ell} = 3^j + b = 3^j + 3^j\frac{(3^{k-j}-1)}{2}.
	\end{equation*}
	Since this implies that $2\cdot 3^{\ell-j} = 2 + (3^{k-j}-1)$,
	which is inconsistent modulo $3$, we conclude that $A$
	has no arithmetic progressions of length three.
\end{proof}

\begin{acknowledgment}{Acknowledgment.}
	Partially supported by National Science Foundation Grant DMS-1001614.
\end{acknowledgment}

\bibliographystyle{monthly}

\bibliography{4QSG}

\providecommand{\bysame}{\leavevmode\hbox to3em{\hrulefill}\thinspace}
\providecommand{\MR}{\relax\ifhmode\unskip\space\fi MR }
\providecommand{\MRhref}[2]{%
  \href{http://www.ams.org/mathscinet-getitem?mr=#1}{#2}
}
\providecommand{\href}[2]{#2}
\begin{thebibliography}{10}

\bibitem{Erdos}
J.~Bukor, P.~Erd{\H{o}}s, T.~{\v{S}}al{\'a}t, and J.~T. T{\'o}th, Remarks on
  the {$(R)$}-density of sets of numbers. {II}, \emph{Math. Slovaca}
  \textbf{47} (1997) 517--526.

\bibitem{BST}
J.~Bukor, T.~{\v{S}}al{\'a}t, and J.~T. T{\'o}th, Remarks on {$R$}-density of
  sets of numbers, \emph{Tatra Mt. Math. Publ.} \textbf{11} (1997) 159--165,
  Number theory (Liptovsk{\'y} J{\'a}n, 1995).

\bibitem{BT}
J.~Bukor and J.~T. T{\'o}th, On accumulation points of ratio sets of positive
  integers, \emph{Amer. Math. Monthly} \textbf{103} (1996) 502--504.

\bibitem{QGP}
S.~R. Garcia, Quotients of {G}aussian {P}rimes, \emph{Amer. Math. Monthly}
  \textbf{120} (2013) 851--853.

\bibitem{QSDE}
S.~R. Garcia, V.~Selhorst-Jones, D.~E. Poore, and N.~Simon, Quotient sets and
  {D}iophantine equations, \emph{Amer. Math. Monthly} \textbf{118} (2011)
  704--711.

\bibitem{Gelca}
R.~Gelca and T.~Andreescu, \emph{Putnam and beyond}, Springer, New York, 2007.

\bibitem{Hardy}
G.~H. Hardy and E.~M. Wright, \emph{An Introduction to the Theory of Numbers},
  6th ed., Oxford University Press, Oxford, 2008.

\bibitem{Harman}
G.~Harman, \emph{Metric number theory}, London Mathematical Society Monographs.
  New Series, vol.~18, The Clarendon Press Oxford University Press, New York,
  1998.

\bibitem{Hedman}
S.~Hedman and D.~Rose, Light subsets of {$\Bbb N$} with dense quotient sets,
  \emph{Amer. Math. Monthly} \textbf{116} (2009) 635--641.

\bibitem{Hobby}
D.~Hobby and D.~M. Silberger, Quotients of primes, \emph{Amer. Math. Monthly}
  \textbf{100} (1993) 50--52.

\bibitem{Nowicki}
A.~Nowicki, Editor's endnotes, \emph{Amer. Math. Monthly} \textbf{117} (2010)
  755--756.

\bibitem{Salat}
T.~{\v{S}}al{\'a}t, On ratio sets of sets of natural numbers, \emph{Acta
  Arith.} \textbf{15} (1968/1969) 273--278.

\bibitem{SalatC}
\bysame, Corrigendum to the paper ``{O}n ratio sets of sets of natural
  numbers''., \emph{Acta Arith.} \textbf{16} (1969/1970) 103.

\bibitem{Starni}
P.~Starni, Answers to two questions concerning quotients of primes, \emph{Amer.
  Math. Monthly} \textbf{102} (1995) 347--349.

\bibitem{ST}
O.~Strauch and J.~T. T{\'o}th, Asymptotic density of {$A\subset\bold N$} and
  density of the ratio set {$R(A)$}, \emph{Acta Arith.} \textbf{87} (1998)
  67--78.

\bibitem{STC}
\bysame, Corrigendum to {T}heorem 5 of the paper: ``{A}symptotic density of
  {$A\subset\Bbb N$} and density of the ratio set {$R(A)$}'' [{A}cta {A}rith.\
  {\bf 87} (1998), no.\ 1, 67--78; {MR}1659159 (99k:11020)], \emph{Acta Arith.}
  \textbf{103} (2002) 191--200.

\end{thebibliography}
%

\begin{biog}

\item[Bryan Brown] (bryanbrown740@gmail.com) is a junior at Pomona College majoring in mathematics and minoring in computer science.

\item[Michael Dairyko] (mdairyko@iastate.edu)
	graduated from Pomona College in 2013 with a degree in Mathematics and is currently a Ph.D.~student at Iowa State University. 
	He hopes to become a professor at a small liberal arts where the climate is warm. When not ``mathing,'' 
	he enjoys the outdoors, socializing with friends, and playing waterpolo.  
	\begin{affil}
		Department of Mathematics,
		Iowa State University,
		396 Carver Hall,
		Ames, IA 50011.
	\end{affil}

\item[Stephan Ramon Garcia] (Stephan.Garcia@pomona.edu) 
	grew up in San Jos\'e, California before attending
	U.C.~Berkeley for his B.A. and Ph.D.  After graduating, he worked at U.C.~Santa Barbara 
	for several years before moving to Pomona College in 2006.
	He has earned three NSF research grants and five teaching awards from three different institutions.
	He is the author of almost fifty research articles in operator theory, complex analysis, matrix analysis, and number theory.
	\url{http://pages.pomona.edu/~sg064747}
	\begin{affil}
		Department of Mathematics,
		Pomona College,
		Claremont, California,
		91711, USA	
	\end{affil}

\item[Bob Lutz] (boblutz@umich.edu)
	is a recent graduate from Pomona College and a Ph.D. student in mathematics at the University of Michigan. 
	An ardent gourmand and sometimes-author, he aspires to teach at a liberal arts college one day, 
	but is tempted by the more stable prospect of a career in food blogging.
	\begin{affil}
		Department of Mathematics, University of Michigan,
		2074 East Hall,
		530 Church Street,
		Ann Arbor, MI  48109-1043
	\end{affil}

\item[Michael Someck] (msomeck@gmail.com)
is a junior mathematics and philosophy double major at Pomona College, currently spending a year abroad at the University of Oxford. 
He is interested in the ways in which mathematicians and philosophers often use the same tool, logic, to solve completely different problems. 
In his free time, he enjoys traveling, spending time with friends, 
reading a good book, or relaxing at a beach in his hometown of San Diego.

\end{biog}
\vfill\eject

\end{document}